\numberwithin{equation}{section}
\let\cal\mathcal
\def\Ascr{{\cal A}}
\def\Bscr{{\cal B}}
\def\Cscr{{\cal C}}
\def\Dscr{{\cal D}}
\def\Iscr{{\cal I}}
\def\Kscr{{\cal K}}
\def\Mscr{{\cal M}}
\def\Oscr{{\cal O}}
\def\Pscr{{\cal P}}
\def\Tscr{{\cal T}}
\def\Xscr{{\cal X}}
\let\blb\mathbb
\def \PP{{\blb P}}
\def \Gammasf{\mathsf{\Gamma}}
\def\pr{\mathop{\text{pr}}\nolimits}
\def\Lotimes{\overset{L}{\otimes}}
\def\Mod{\operatorname{Mod}}
\def\Qch{\operatorname{Qch}}
\def\coh{\mathop{\text{\upshape{coh}}}}
\def\rad{\operatorname {rad}}
\def\Ext{\operatorname {Ext}}
\def\Hom{\operatorname {Hom}}
\def\End{\operatorname {End}}
\def\RHom{\operatorname {RHom}}
\def\End{\operatorname {End}}
\def\gldim{\operatorname {gl\,dim}}
\def\pdim{\operatorname {p\,dim}}
\def\r{\rightarrow}
\DeclareMathOperator{\HH}{HH}
\newtheorem{lemma}{Lemma}[section]
\newtheorem{proposition}[lemma]{Proposition}
\newtheorem{theorem}[lemma]{Theorem}
\newtheorem{corollary}[lemma]{Corollary}
\theoremstyle{definition}
\newtheorem{definition}[lemma]{Definition}
\theoremstyle{remark}
\newtheorem{remark}[lemma]{Remark}
\newdimen\uboxsep \uboxsep=1ex
\def\uboxn#1{\vtop to 0pt{\hrule height 0pt depth 0pt\vskip\uboxsep
\hbox to 0pt{\hss #1\hss}\vss}}
\def\uboxs#1{\vbox to 0pt{\vss\hbox to 0pt{\hss #1\hss}
\vskip\uboxsep\hrule height 0pt depth 0pt}}
\let\oldmarginpar\marginpar
\long\def\marginpar#1{\oldmarginpar{\raggedright \tiny \baselineskip 5pt #1}}
\definecolor{ruta2}{rgb}{0.409, 0.459, 0.208}
\def\Ob{\operatorname{Ob}}
\def\Perf{\operatorname{Perf}}
\def\aa{\mathfrak{a}}
\def\bb{\mathfrak{b}}
\def\uPerf{\Pscr\mathrm{erf}} 
\def\Dinf{\operatorname{D}_{\infty}}
\def\cone{\operatorname{cone}}
\title{New examples of non-Fourier-Mukai functors}
\author{Theo Raedschelders}
\address[Theo Raedschelders]{Departement Wiskunde, Vrije Universiteit
  Brussel, Pleinlaan $2$, B-1050 Elsene}
\email{theo.raedschelders@vub.be}
\author{Alice Rizzardo}
\address[Alice Rizzardo]{Department of Mathematical Sciences\\
University of Liverpool\\
Mathematical Sciences Building\\
Liverpool L69 7ZL\\
United Kingdom}
\email{alice.rizzardo@liverpool.ac.uk}
\author{Michel Van den Bergh}
\address[Michel Van den Bergh]{Universiteit Hasselt\\Martelarenlaan 42\\3500 Hasselt\\Belgium}
\email{michel.vandenbergh@uhasselt.be}
\email{michel.van.den.bergh@vub.be}
\thanks{The first author is supported by a postdoctoral fellowship from the Research Foundation - Flanders (FWO). The second author is a Lecturer at the University of Liverpool. She is supported by EPSRC grant EP/N021649/1. The third author is a senior researcher at the Research Foundation - Flanders (FWO). He is supported by the FWO-grant G0D8616N ``Hochschild cohomology and deformation theory of triangulated categories.''}
\keywords{Fourier-Mukai functor, Orlov's theorem}
\subjclass{13D09, 18E30, 14A22}
\begin{document}
\begin{abstract}
In this paper we prove that any  smooth projective variety of dimension $\ge 3$ equipped with a tilting bundle
can serve as the source variety of a non-Fourier-Mukai functor between smooth projective schemes.
\end{abstract}
\maketitle
\section{Introduction}
Throughout we fix a base field $k$ and all constructions are linear over $k$.
The first example of a non-Fourier-Mukai functor (see Remark \ref{rem:fm}) between bounded derived categories
of smooth projective schemes was given in \cite{MR3955712}. The functor constructed by the second and third author is of the form $D^b(\coh(Q))\r D^b(\coh(\PP^4))$ where $Q$ is a three-dimensional
smooth quadric and $\PP^4$ is its ambient projective space.
The construction proceeds in two steps. First a prototypical non-Fourier-Mukai functor is constructed between
certain non-geometric DG-categories. Then, using a quite involved argument, this functor is 
turned into a  geometric one.

In this paper we show that if one is not interested in ``small'' examples
the second part of the construction can be simplified by combining results from \cite{MR3545926} with ideas from \cite{Orlov1}.

Recall that if $X$ is a scheme then
a \emph{tilting bundle $T$} on $X$ is a vector bundle on~$X$ such that $\Ext^{>0}_X(T,T)=0$ and such that $T$ generates $D_{\Qch}(\Oscr_X)$.
The following is our main result.
\begin{theorem}[see \S\ref{sec:proof}]
\label{th:mainth}
Let $X$ be a smooth projective scheme of dimension $m\ge 3$ which has a tilting bundle.
Then there is a non-Fourier Mukai functor
\begin{equation}
\label{eq:nonfm}
D^b(\coh(X))\r D^b(\coh(Y))
\end{equation}
where $Y$ is a smooth projective scheme.
\end{theorem}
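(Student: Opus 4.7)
The plan is to exploit the tilting bundle to reduce to a DG-algebraic problem, apply the prototype construction from \cite{MR3955712}, and then geometrize the target via an Orlov-style embedding. In detail, since $T$ is a tilting bundle on $X$, the functor $\RHom_X(T,-)$ induces an equivalence $D^b(\coh(X)) \simeq \Perf(A)$, where $A = \End_X(T)$ has finite global dimension because $X$ is smooth. It therefore suffices to construct a non-Fourier-Mukai functor $\Perf(A) \to D^b(\coh(Y))$ for some smooth projective $Y$.

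The first substantive step is to apply the construction of \cite{MR3955712}, together with the machinery from \cite{MR3545926}, to produce a triangulated functor $\bar F : \Perf(A) \to \Dscr$ into a (DG-enhanceable) triangulated category $\Dscr$ which admits no DG-lift and hence cannot be Fourier-Mukai. The dimension hypothesis $m \ge 3$ should enter here: it is what guarantees the existence of the Hochschild cohomology obstruction class on $X$ needed to ensure the prototype functor is genuinely non-liftable. One expects $\Dscr$ to be built as a gluing of $\Perf(A)$ with itself along a suitable $A$-bimodule twisted by this class.

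The second step is to realize $\Dscr$ geometrically. Following the ideas of \cite{Orlov1}, after replacing $\Dscr$ by a sufficiently small subcategory containing the image of $\bar F$, one checks that the underlying DG-enhancement is smooth and proper over $k$, and then applies Orlov's gluing/embedding theorem to obtain a fully faithful admissible embedding $\iota : \Dscr \hookrightarrow D^b(\coh(Y))$ for some smooth projective $Y$. The composition $\iota \circ \bar F$ is then the candidate functor.

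Finally, to see that $\iota \circ \bar F$ remains non-Fourier-Mukai, one uses that admissible embeddings and their adjoint projections are themselves Fourier-Mukai: if $\iota \circ \bar F$ admitted a Fourier-Mukai kernel, composing with the Fourier-Mukai projection $\pi : D^b(\coh(Y)) \to \Dscr$ would yield a Fourier-Mukai realization of $\bar F$, contradicting its non-liftability. The main obstacle is calibrating the prototype construction in the second step so that the DG-target $\Dscr$ is simultaneously \emph{not} geometric (so that $\bar F$ can fail to be Fourier-Mukai) and tame enough to satisfy the smoothness and properness hypotheses of Orlov's embedding theorem; reconciling these two requirements, while keeping the obstruction class alive, is where the arguments of \cite{MR3545926} and \cite{Orlov1} must be carefully combined.
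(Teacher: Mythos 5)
Your broad strategy --- use the tilting bundle and the prototype construction of \cite{MR3955712}, geometrize the target using Orlov's results from \cite{MR3545926}, and deduce non-Fourier-Mukai-ness by composition --- is the paper's strategy, and you correctly identify both the role of the hypothesis $m\ge 3$ and the need to cut the target down to a small subcategory. The gap is in how you geometrize that small subcategory. The relevant subcategory is $\Perf(\mathsf{R}^\circ)$, where $\mathsf{R}$ is the $A_\infty$-endomorphism algebra of $L(T)$ inside the deformed target $\Dinf(\Xscr_\eta)$; you propose to check directly that its DG-enhancement is smooth and proper and then invoke Orlov to get an \emph{admissible} embedding $\iota$, with right adjoint $\pi$, into $D^b(\coh(Y))$. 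But $\mathsf{R}$ has cohomology $\End_X(T)$ in degree $0$ plus additional cohomology in strictly negative degrees, glued by nontrivial higher $A_\infty$-products; such $A_\infty$-algebras are proper but there is no reason for them to be smooth, and in general they are not. Without smoothness no admissible embedding --- and hence no projection $\pi$ as in your last step --- need exist. The paper's way around this is the modified Auslander $A_\infty$-category $\Gammasf$ attached to the cohomological filtration $F^p\mathsf{R}=\bigoplus_{i\ge p}\mathsf{R}^{-i}$ (\S\ref{sec:filtered}): $\Gammasf$ has a semi-orthogonal decomposition into $n$ copies of $\Perf(H^0(\mathsf{R}))$ (Proposition~\ref{prop:sod}), so Orlov's gluing theorem applies to $\Gammasf$ even though $\mathsf{R}$ itself need not be smooth, while $\Perf(\mathsf{R}^\circ)$ embeds fully faithfully --- though not admissibly --- into $\Perf(\Gammasf)$ (Corollary~\ref{cor:ff}). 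This filtration/Auslander device is the paper's main technical innovation and is absent from your plan. Correspondingly, the non-Fourier-Mukai conclusion is extracted in the paper without any projection, by corestricting to the essential image and inverting the resulting quasi-equivalence.

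The ``tension'' you flag at the end --- that $\Dscr$ must be non-geometric for $\bar{F}$ to fail to be Fourier-Mukai, yet tame enough for Orlov's theorem --- is a misconception and should be dropped. The corestricted target $\Perf(\mathsf{R}^\circ)$ \emph{does} admit a fully faithful Fourier-Mukai embedding into $D^b(\coh(Y))$, and yet the functor $L$ into it is still not Fourier-Mukai. Non-Fourier-Mukai-ness is an obstruction to lifting the \emph{functor} to the $A_\infty$-level, not a defect of the target category; once you stop trying to keep the target non-geometric, the problem becomes precisely the geometrization question that the Auslander construction solves.
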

To apply this theorem we may for example take 
 $X=\PP^m$, $m\ge 3$ which has the Beilinson tilting bundle $\bigoplus_{i=0}^m \Oscr_X(i)$.

\section{Preliminaries on $A_\infty$-categories.}
\label{sec:prelims}
Our general reference for $A_\infty$-algebras and $A_\infty$-categories will be \cite{Lefevre}. Sometimes we silently use notions for categories which are only introduced for algebras (i.e.\ categories with one object) in loc.\ cit.
 We assume that all $A_\infty$-notions are \emph{strictly unital}. Unless otherwise specified we use cohomological grading.
\begin{remark}
\label{rem:quasi}
Below we will rely throughout on the fact that the homotopy categories of $A_\infty$-categories and DG-categories are equivalent.
See \cite{COS}. This implies in particular that we can freely use Orlov's gluing results in \cite{MR3545926} in the $A_\infty$-context.
\end{remark}
\begin{definition}\label{def:non-FM}
Let $\aa$, $\bb$ be pretriangulated $A_\infty$-categories  \cite{BLM}  and put $\Ascr=H^0(\aa)$, $\Bscr=H^0(\bb)$.
We say that an exact
functor $F:\Ascr\r \Bscr$ is  \emph{Fourier-Mukai} if
there is an $A_\infty$-functor $f:\aa\r \bb$ such that
$F\cong H^0(f)$ as graded functors. 
\end{definition}
Often $\aa$, $\bb$ are uniquely  determined by $\Ascr$, $\Bscr$ (see \cite{MR3861804,OrlovLunts}) or else implicit from the context, and then we do not specify them.
\begin{remark}
\label{rem:fm} If $X$, $Y$ are smooth projective varieties and $F:D^b(\coh(X))\r D^b(\coh(Y))$ is a traditional
Fourier-Mukai functor which means that it can be written as  $R\pr_{2\ast}(\Kscr\Lotimes_{X\times Y}L\pr_1^\ast(-))$  for $\Kscr\in D^b(\coh(X\times Y))$ then it is Fourier-Mukai in our sense.
This follows from the easy part of \cite[Theorem 8.15]{Toen} combined with Remark \ref{rem:quasi}.
\end{remark}

For an
$A_\infty$-category $\aa$ we denote by\footnote{$\Dscr_\infty(\aa)$ is
  denoted by $\Cscr_\infty(\aa)$ in \cite{Lefevre}.}
$\Dscr_\infty(\aa)$ the DG(!)-category of left $A_\infty$-modules. The
$A_\infty$-Yoneda functor
\begin{equation}
\label{eq:yoneda}
\aa\r \Dscr_\infty(\aa^\circ):X\mapsto \aa(-,X)
\end{equation}
is quasi-fully faithful \cite[Lemma 7.4.0.1]{Lefevre}. The corresponding homotopy category
$\Dinf(\aa)\coloneqq H^0(\Dscr_\infty(\aa))$ is a compactly generated
triangulated category \cite[\S4.9]{MR2258042} with compact generators $\aa(X,-)$
for $X\in \Ob(\aa)$. 
We write $\uPerf(\aa)$ for the full DG-subcategory
of $\Dscr_\infty(\aa)$  spanned by
the compact objects in $\Dinf(\aa)$ and we also put $\Perf(\aa)=H^0(\uPerf(\aa))$.

If $\Ascr$ is a triangulated category and $S\subset \Ob(\Ascr)$ then
the category \emph{classically generated by $S$} \cite[\S1]{BondalVdB} is the smallest thick
subcategory of $\Ascr$ containing $S$. It is denoted by $\langle S\rangle $.
 By
\cite[\S5.3]{Keller94},\cite[Lemma 2.2]{Neeman92A} $\Perf(\aa)$ is
classically generated by the objects $\aa(X,-)$. 

\medskip

If $f:\aa\r \bb$ is an $A_\infty$-functor then we may view $\bb$ as an $A_\infty$-$\bb$-$\aa$-bimodule. Hence we have a ``standard'' DG-functor
\[
\mathfrak{b} \overset{\infty}{\otimes}_{\mathfrak{a}} -: \Dscr_{\infty}(\mathfrak{a}) \to \Dscr_{\infty}(\mathfrak{b})
\]
which (for algebras) is introduced in \cite[\S4.1.1]{Lefevre}.
We recall the following basic result.
\begin{lemma}
\label{lem:ff}
For $A_{\infty}$-categories $\mathfrak{a}, \mathfrak{b}$ and a
quasi-fully faithful $A_{\infty}$-functor
$f:\mathfrak{a} \to \mathfrak{b}$, the induced functor
$\bb\overset{\infty}{\otimes}_{\aa}-:\Dinf(\aa)\r \Dinf(\bb)$ is fully faithful. Moreover this
functor restricts to a fully faithful Fourier-Mukai functor
$\Perf(\aa)\r \Perf(\bb)$.
\end{lemma}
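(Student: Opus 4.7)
My plan is to reduce the claim to a computation on the compact generators, namely the representable modules $\aa(X,-)$, and then bootstrap via the triangulated and cocontinuous structure. Write $\Phi \coloneqq \bb \overset{\infty}{\otimes}_\aa -$ for brevity.

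The first step I would take is to establish the identification $\Phi(\aa(X,-)) \simeq \bb(f(X),-)$ in $\Dinf(\bb)$, naturally in $X$. This is the $A_\infty$-version of the standard ``tensor with a representable is representable'', and it follows by computing the tensor product via the bar resolution of $\aa(X,-)$: this resolution, tensored over $\aa$ with $\bb$ (viewed as a $\bb$-$\aa$-bimodule through $f$), collapses to $\bb(f(X),-)$ via the $A_\infty$-Yoneda lemma. Granting this, the Yoneda lemma computes
\[
\Hom_{\Dinf(\aa)}(\aa(X,-), \aa(Y,-)) \cong H^0(\aa(Y,X)), \qquad \Hom_{\Dinf(\bb)}(\bb(f(X),-), \bb(f(Y),-)) \cong H^0(\bb(f(Y),f(X))),
\]
and I would verify that the map between them induced by $\Phi$ is identified with $H^0(f_{Y,X})$, an isomorphism by the quasi-fully-faithfulness of $f$. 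This shows $\Phi$ is fully faithful on the full subcategory of representables and already sends them into $\Perf(\bb)$.

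Next I would extend fully-faithfulness to $\Perf(\aa)$. Because $\Phi$ is a DG-functor, $H^0(\Phi)$ is triangulated and preserves direct summands. Combined with the previous step and the fact (recalled in the excerpt) that $\Perf(\aa)$ is classically generated by the representables, a standard d\'evissage on the natural map $\Hom(M,N) \to \Hom(\Phi M, \Phi N)$---first letting $N$ range over representables with $M$ arbitrary in $\Perf(\aa)$, then doing the same in the other variable---shows it is an isomorphism for all $M, N \in \Perf(\aa)$. The same d\'evissage also shows $\Phi$ sends $\Perf(\aa)$ into $\Perf(\bb)$, yielding the ``moreover'' statement.

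Finally, to promote fully-faithfulness from $\Perf(\aa)$ to $\Dinf(\aa)$, I would use that $\Phi$ is a left adjoint (its right adjoint being restriction of scalars along $f$) and hence preserves coproducts on homotopy categories. Since the representables are compact generators of $\Dinf(\aa)$, a standard compact-generation argument extends fully-faithfulness: for fixed compact $M$ both $\Hom_{\Dinf(\aa)}(M,-)$ and $\Hom_{\Dinf(\bb)}(\Phi M, \Phi -)$ are cohomological functors that send coproducts to products, agree on representables by the first step, and hence agree on all of $\Dinf(\aa)$; one then varies $M$ by the same argument. The main obstacle I expect is the very first step---the identification of $\Phi$ on a representable---which requires unwinding the $A_\infty$-tensor product and the bar construction with careful attention to the sign, shift and unit conventions of \cite{Lefevre}; the rest of the argument is formal.
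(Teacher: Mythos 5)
Your proposal is correct and follows essentially the same route as the paper: identify $\Phi(\aa(X,-))\simeq \bb(fX,-)$ (the paper cites \cite[Lemme 4.1.1.6]{Lefevre} for exactly this), deduce full faithfulness on representables from quasi-full-faithfulness of $f$, and then bootstrap by d\'evissage using that the representables compactly generate $\Dinf(\aa)$ and classically generate $\Perf(\aa)$. The paper compresses all of this into a single sentence (``follows by d\'evissage''), whereas you usefully spell out the two-step d\'evissage and the role of coproduct-preservation, but there is no substantive difference in strategy.
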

\begin{proof}
By the  same argument as in the proof of \cite[Lemme 4.1.1.6]{Lefevre} 
there is a quasi-isomorphism 
\begin{equation}
\label{eq:perf}
\mathfrak{b} \overset{\infty}{\otimes}_{\mathfrak{a}} \mathfrak{a}(X,-) \r \mathfrak{b}(fX,-)
\end{equation}
 for $X \in \Ob(\mathfrak{a})$, functorial in $X$.
In other words there is a pseudo commutative diagram
\[
\begin{tikzcd}
H^0(\mathfrak{a})^\circ \ar[r,"H^0(f)"]\ar[d] & H^0(\mathfrak{b})^\circ\ar{d} \\
\Dinf(\mathfrak{a})\ar[r, "\mathfrak{b}\overset{\infty}{\otimes}_{\mathfrak{a}} -"']& \Dinf(\mathfrak{b})
\end{tikzcd}
\]
where the vertical arrows are the Yoneda embeddings $X\mapsto \mathfrak{a}(X,-)$, $Y\mapsto \mathfrak{b}(Y,-)$. 
The full faithfulness of the lower arrow follows by d\'evissage. The claim about $\Perf$ follows immediately from \eqref{eq:perf}.
\end{proof}
The following lemma is a variant on Lemma \ref{lem:ff} and could have been deduced from it.
\begin{lemma} 
\label{lem:porta}
  Assume that $\aa$ is a pre-triangulated $A_\infty$-category \cite{BLM} 
such 
  that $H^0(\aa)$ is Karoubian and classically generated by $T\in \Ob(\aa)$. Put $\mathsf{R}=\aa(T,T)$.
The $A_\infty$-functor
\[
f:
\aa\r \Dscr_\infty(\mathsf{R}^\circ):X\mapsto \aa(T,X)
\]
defines a quasi-equivalence
\[
\aa\r \uPerf(\mathsf{R}^\circ)
\]
or, equivalently, an equivalence of triangulated categories
\begin{equation}
\label{eq:toprove}
H^0(\aa)\cong \Perf(\mathsf{R}^\circ)
\end{equation}
\end{lemma}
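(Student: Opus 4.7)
The plan is to verify directly that $f$ is quasi-fully faithful and that $H^0(f)$ is essentially surjective; the Karoubian and classical-generation hypotheses come in precisely when deducing the second from the first.

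For quasi-full faithfulness, I would fix $Y \in \Ob(\aa)$ and consider the full subcategory $\Sscr \subset H^0(\aa)$ consisting of those $X$ for which the comparison map
\[
\aa(X,Y) \to \uRHom_{R^\circ}\!\bigl(\aa(T,X),\aa(T,Y)\bigr)
\]
induced by $f$ is a quasi-isomorphism. When $X=T$, the target reduces via an $A_\infty$-Yoneda argument (as in \eqref{eq:perf}) to $\aa(T,Y)$ itself and the comparison map is, up to homotopy, the identity, so $T \in \Sscr$. Since $\aa$ is pre-triangulated, both $\aa(-,Y)$ and $\uRHom_{R^\circ}(\aa(T,-),\aa(T,Y))$ send distinguished triangles in $H^0(\aa)$ to distinguished triangles and are additive, and the comparison map respects direct sum decompositions; a five-lemma/d\'evissage argument then shows $\Sscr$ is closed under cones, shifts, and summands, i.e.\ is a thick subcategory of $H^0(\aa)$. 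Because $T$ classically generates $H^0(\aa)$, this forces $\Sscr = H^0(\aa)$.

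For essential surjectivity of $H^0(f)$ I would use that $\Perf(R^\circ)$ is classically generated by the free module $R = R^\circ(\ast,-)$ (the Keller--Neeman result cited in the preliminaries), together with the fact that $f(T) = \aa(T,T) = R$. The image of $H^0(f)$ is therefore a triangulated subcategory containing $R$; fullness from Step 1 combined with the Karoubian hypothesis shows it is closed under summands, because any idempotent on $H^0(f)(X)$ in $\Perf(R^\circ)$ lifts to an idempotent on $X$ in $H^0(\aa)$, which splits. So the image is a thick subcategory of $\Perf(R^\circ)$ containing a classical generator, and hence equals all of $\Perf(R^\circ)$.

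The only mildly delicate point is the base case $X=T$ in the d\'evissage of Step 1: identifying $\uRHom_{R^\circ}(R,\aa(T,Y))$ with $\aa(T,Y)$ in a way compatible with the $A_\infty$-structure of $f$ requires the version of Yoneda for $A_\infty$-modules that already underlies \eqref{eq:perf} in the proof of Lemma \ref{lem:ff}. Everything else is routine thick-subcategory bookkeeping.
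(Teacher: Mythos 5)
Your proof is correct and follows the same strategy as the paper: identify $H^0(f)(T)=\mathsf{R}$, observe that $T$ and $\mathsf{R}$ classically generate their respective categories, use Yoneda for the base case of full faithfulness, and perform a d\'evissage (with Karoubianness entering to close the essential image under summands). The paper's proof is terser but relies on exactly these ingredients.
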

\begin{proof}
We must  prove \eqref{eq:toprove}. We have $H^0(f)(T)=\mathsf{R}$.
  By hypothesis $H^0(\aa)$ is classically generated by $T$ and by the
  previous discussion $\Perf(\mathsf{R}^\circ)$ is classically generated by
  $\mathsf{R}$. Moreover since the Yoneda functor is quasi-fully
  faithful, $H^0(f)$ is fully faithful when restricted to~$T$. The
rest follows by d\'evissage.
\end{proof}
\section{Geometric realization of a filtered $A_{\infty}$-algebra}
\label{sec:filtered}
Let $(\mathsf{R},m_*)$ denote a finite-dimensional
$A_{\infty}$-algebra equipped with a (decreasing) filtration
$F^*\coloneqq \{F^p\mathsf{R}\}_{p \geq 0}$.  This means that
$\{F^p\mathsf{R}\}_{p \geq 0}$ is a decreasing filtration of the
underlying graded vector space of $\mathsf{R}$ satisfying the
compatibility conditions
\begin{equation}
\label{eq:filt}
m_p(F^{i_1} \otimes \cdots \otimes F^{i_p}) \subset F^{i_1+\cdots+i_p}
\end{equation}
for all $p$ and all $i_1,\ldots,i_p$. 

Assume $F^n\mathsf{R}=F^n=0$ for some $n\ge 0$.  In this case we may define the \emph{(modified) Auslander $A_{\infty}$-category} $\mathsf{\Gamma}=\mathsf{\Gamma}_{\mathsf{R},F^*}$ of $(\mathsf{R},F^*)$.
The objects of $\Gammasf$ are the
integers $0,\ldots,n-1$ and we set 
\begin{equation}
\mathsf{\Gamma}(j,i)\coloneqq F^{\max(j-i,0)}/F^{n-i}.
\end{equation}
By setting $\Gammasf_{i,j}=\Gammasf(j,i)$, we can represent $\Gammasf$ schematically via the matrix
\begin{equation}
\label{eq:Gamma}
(\mathsf{\Gamma}_{i,j})=
\begin{pmatrix}
\mathsf{R} & F^1 & F^2 & \cdots & F^{n-1} \\[8pt]
\mathsf{R}/F^{n-1} & \mathsf{R}/F^{n-1} & F^1/F^{n-1} & \cdots & F^{n-2}/F^{n-1} \\[8pt]
\mathsf{R}/F^{n-2} & \mathsf{R}/F^{n-2} & \mathsf{R}/F^{n-2} & \cdots & F^{n-3}/F^{n-2} \\[8pt]
\vdots & \vdots & \vdots & \ddots & \vdots \\[8pt]
\mathsf{R}/F^1 & \mathsf{R}/F^1 & \mathsf{R}/F^1 & \mathsf{R}/F^1 & \mathsf{R}/F^1 
\end{pmatrix}
\end{equation}
so that composition is given by matrix multiplication.

The grading on $\mathsf{R}$ induces a grading on $\Gammasf$. Because of condition \eqref{eq:filt}, the higher multiplications on $\mathsf{R}$ also induce multiplications on  $\mathsf{\Gamma}$.  Indeed, 
\begin{equation}
\max(i_{p+1}-i_1,0) \leq \max(i_2-i_1,0) + \cdots + \max(i_{p+1}-i_{p},0),
\end{equation}
so 
\begin{equation}
m_p(F^{\max(i_2-i_1,0)} \otimes \cdots \otimes F^{\max(i_{p+1}-i_p,0)}) \subset F^{\max(i_{p+1}-i_1,0)}.
\end{equation}
Also, 
\begin{multline}
\max(i_2-i_1,0)+\cdots+\max(i_k -i_{k-1})+(n-i_k)\\
+\max(i_{k+2}-i_{k+1},0) + \cdots + \max(i_{p+1}-i_p,0) \\
\geq \max(i_2-i_1,0)+\cdots+\max(i_{k-1} -i_{k-2})+(n-i_{k-1}) 
\geq n -i_1,
\end{multline}
so $m_p$ passes to the quotients 
\begin{equation}
m^{\mathsf{\Gamma}}_p:\mathsf{\Gamma}_{i_1,i_2} \otimes \mathsf{\Gamma}_{i_2,i_3} \otimes \cdots \otimes \mathsf{\Gamma}_{i_{p-1},i_p} \otimes \mathsf{\Gamma}_{i_{p},i_{p+1}} \to \mathsf{\Gamma}_{i_1,i_{p+1}}
\end{equation}
making $\mathsf{\Gamma}$ into an $A_{\infty}$-category. 
\begin{remark}
The same construction also yields the $A_\infty$-algebra $ \bigoplus_{i,j} \Gammasf_{i,j}$, which encodes the same data as $\Gammasf$.
The above construction is similar in spirit to \cite[\S 5]{MR3439086}.
If $\mathsf{R}$ is concentrated in degree $0$ and $F$ is the radical filtration, we obtain a subalgebra of Auslander's original definition \cite{auslander1999representation}.
\end{remark}
\label{sec:filtered2}

Since $\mathsf{\Gamma_{0,0}}=\mathsf{R}$, by thinking of $\mathsf{R}$ as an 
$A_\infty$-category with one object we have a fully faithful strict $A_\infty$-functor
\[
\mathsf{R}\r \Gammasf
\]
whence we obtain by Lemma \ref{lem:ff}:
\begin{corollary}
\label{cor:ff}
There is a fully faithful functor
\[
\Gammasf\overset{\infty}{\otimes}_{\mathsf{R}}-: \Perf(\mathsf{R})\r \Perf(\mathsf{\Gamma}).
\]
\end{corollary}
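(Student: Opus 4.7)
The approach is a direct application of Lemma \ref{lem:ff} to the natural embedding of $\mathsf{R}$ as the $(0,0)$-entry of $\Gammasf$, which the text immediately preceding the statement essentially hands us for free. The plan is to verify the three ingredients Lemma \ref{lem:ff} requires: (i) the existence of an $A_\infty$-functor $\iota: \mathsf{R} \to \Gammasf$, (ii) strictness/well-definedness, and (iii) quasi-full faithfulness.

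First I would compute $\Gammasf(0,0)$ from the defining formula $\Gammasf(j,i) = F^{\max(j-i,0)}/F^{n-i}$: setting $i=j=0$ and using $F^0 = \mathsf{R}$ together with the hypothesis $F^n = 0$ gives $\Gammasf(0,0) = \mathsf{R}/0 = \mathsf{R}$. Next I would observe that the induced higher products $m_p^{\Gammasf}$ restricted to $\Gammasf(0,0)^{\otimes p} = \mathsf{R}^{\otimes p}$ coincide with the original $m_p$ on $\mathsf{R}$, because the matrix \eqref{eq:Gamma} takes no quotient at the $(0,0)$-entry, so the passage-to-quotients in the construction of $m_p^{\Gammasf}$ is vacuous there.

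Viewing $\mathsf{R}$ as a one-object $A_\infty$-category $\ast$, I would then define the strict $A_\infty$-functor
\[
\iota: \mathsf{R} \longrightarrow \Gammasf, \qquad \ast \mapsto 0,
\]
with $\iota_1 = \Id_{\mathsf{R}}$ on morphism spaces and $\iota_{p} = 0$ for $p \geq 2$. The previous paragraph shows that this respects all higher multiplications, so it is a bona fide strict $A_\infty$-functor; since $\iota_1$ is the identity, it is tautologically (quasi-)fully faithful. Applying Lemma \ref{lem:ff} to $\iota$ then produces the desired fully faithful Fourier-Mukai functor
\[
\Gammasf \overset{\infty}{\otimes}_{\mathsf{R}} -: \Perf(\mathsf{R}) \longrightarrow \Perf(\Gammasf).
\]

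There is no real obstacle: the only substantive point is the identification $\Gammasf(0,0) = \mathsf{R}$ as $A_\infty$-algebras, which is transparent from the construction in \S\ref{sec:filtered}. Everything else is bookkeeping, and the heavy lifting (devissage from the Yoneda embedding) is already packaged inside Lemma \ref{lem:ff}.
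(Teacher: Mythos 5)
Your proposal is correct and matches the paper's argument exactly: the paper also observes that $\Gammasf(0,0)=\mathsf{R}$, obtains a fully faithful strict $A_\infty$-functor $\mathsf{R}\to\Gammasf$ by sending the unique object to $0$, and applies Lemma \ref{lem:ff}. You have simply spelled out the bookkeeping a bit more explicitly.
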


\begin{proposition}
\label{prop:sod}
Let $\bar{\mathsf{R}}=R/F^1$. There are $n$ quasi-fully-faithful $A_\infty$-functors
\[
\uPerf(\bar{\mathsf{R}})\to \uPerf(\Gammasf)
\]
giving rise to a semi-orthogonal decomposition
\[
\Perf(\mathsf{\Gamma})=\langle\underbrace{ \Perf(\bar{\mathsf{R}}),\ldots,\Perf(\bar{\mathsf{R}})}_n\rangle.
\]
\end{proposition}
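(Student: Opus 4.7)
My plan is to prove the proposition by induction on $n$, building up the semi-orthogonal decomposition one piece at a time via a natural chain of full $A_\infty$-subcategory inclusions. For $1 \le k \le n$, let $\Gammasf^{(k)}$ denote the full $A_\infty$-subcategory of $\Gammasf$ on the last $k$ objects $\{n-k, \ldots, n-1\}$. A direct computation with the formula $\Gammasf(j,i) = F^{\max(j-i,0)}/F^{n-i}$ shows that, after relabeling the objects, $\Gammasf^{(k)}$ is canonically isomorphic to the modified Auslander $A_\infty$-category $\Gammasf_{\mathsf{R}/F^k,\, F^*/F^k}$ of the truncated filtered algebra. In particular $\Gammasf^{(1)} = \bar{\mathsf{R}}$, $\Gammasf^{(n)} = \Gammasf$, and we obtain a chain
\[
\bar{\mathsf{R}} = \Gammasf^{(1)} \subset \Gammasf^{(2)} \subset \cdots \subset \Gammasf^{(n)} = \Gammasf
\]
of full $A_\infty$-subcategory inclusions, each of which induces a quasi-fully-faithful $A_\infty$-functor on $\uPerf$ by Lemma~\ref{lem:ff}.

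The inductive step is the one-step SOD $\Perf(\Gammasf^{(k)}) = \langle \Perf(\bar{\mathsf{R}}), \Perf(\Gammasf^{(k-1)}) \rangle$. The new $\Perf(\bar{\mathsf{R}})$ factor corresponds to the object added in passing from $\Gammasf^{(k-1)}$ to $\Gammasf^{(k)}$; call it $0$ in the reindexing of $\Gammasf^{(k)}$. Although $\Gammasf^{(k)}(0,0) = \mathsf{R}/F^k$ is not itself $\bar{\mathsf{R}}$, the canonical surjection $\mathsf{R}/F^k \twoheadrightarrow \bar{\mathsf{R}}$ lets me define a right $A_\infty$-module $T \in \Dscr_\infty(\Gammasf^{(k),\,\circ})$ with $T(0) = \bar{\mathsf{R}}$, $T(j) = 0$ for $j \ge 1$, and $\Gammasf^{(k)}$-action factoring through this quotient. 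There is a short exact sequence
\[
0 \to \tilde K \to \Gammasf^{(k)}(-, 0) \to T \to 0,
\]
with $\tilde K(0) = F^1/F^k$ and $\tilde K(j) = F^j/F^k = \Gammasf^{(k)}(j,0)$ for $j \ge 1$; by filtering $\tilde K$ according to $F^\bullet$ one shows it is classically generated by the representables $\Gammasf^{(k)}(-, j)$ with $j \ge 1$, so $T$ is perfect. A resolution argument then identifies $\End_{\Dinf(\Gammasf^{(k),\,\circ})}(T) \simeq \bar{\mathsf{R}}$ as $A_\infty$-algebras, and Lemma~\ref{lem:porta} (applied to $\langle T \rangle \subset \Perf(\Gammasf^{(k)})$) yields a quasi-fully-faithful $A_\infty$-functor $\uPerf(\bar{\mathsf{R}}) \to \uPerf(\Gammasf^{(k)})$.

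It remains to verify the SOD conditions. Semi-orthogonality follows from $A_\infty$-Yoneda: $\RHom(\Gammasf^{(k)}(-, j), T) = T(j) = 0$ for $j \ge 1$, so $T$ is right-orthogonal to the representables classically generating the image of $\Perf(\Gammasf^{(k-1)})$. Generation amounts to the observation that the one remaining representable $\Gammasf^{(k)}(-, 0)$ lies in $\langle T, \Perf(\Gammasf^{(k-1)}) \rangle$ via the short exact sequence above. Iterating this inductive step from $\Gammasf^{(1)} = \bar{\mathsf{R}}$ up to $\Gammasf^{(n)} = \Gammasf$ produces the $n$ desired embeddings (composing at each stage with the quasi-fully-faithful inclusion $\uPerf(\Gammasf^{(k)}) \hookrightarrow \uPerf(\Gammasf)$) and the full SOD of length $n$. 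The main obstacle is the identification of $\End(T)$ as an $A_\infty$-algebra rather than merely as a graded vector space: this requires a careful analysis of how the higher multiplications $m_p^{\Gammasf}$ restrict and descend to the endomorphism complex of $T$ through the chosen resolution.
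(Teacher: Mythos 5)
Your argument is, at heart, the same as the paper's: after translating conventions, the object $T$ you introduce at stage $k$ is the restriction to the subcategory $\Gammasf^{(k)}$ of the cone $S_{n-k}$ of the inclusion $\psi_{n-k}\colon P_{n-k+1}\to P_{n-k}$ used in the paper, your chain $\Gammasf^{(1)}\subset\cdots\subset\Gammasf^{(n)}$ simply runs the paper's induction on $\langle P_i,\ldots,P_{n-1}\rangle$ in the opposite direction, and the observation that $\Gammasf^{(k)}$ is the Auslander category of the truncated filtered algebra $(\mathsf{R}/F^k, F^*/F^k)$ is a pleasant way to keep track of the bookkeeping. (Do note, though, that you work in $\Dscr_\infty(\Gammasf^{(k),\circ})$ while the paper's $\Perf(\Gammasf)$ is built from left modules $\Gammasf(X,-)$; this is a fixable orientation mismatch, but it needs to be ironed out.)

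The substantive issue is the one you flag yourself and then leave open. To apply Lemma~\ref{lem:porta} you need the $A_\infty$-endomorphism algebra $\uEnd_{\Dscr_\infty}(T)$ to be \emph{quasi-isomorphic to $\bar{\mathsf{R}}$ as an $A_\infty$-algebra}, not merely to have cohomology isomorphic to $H^*(\bar{\mathsf{R}})$ as a graded algebra. This does not follow from a resolution computation of $\Hom$'s: in general a graded-algebra isomorphism on cohomology does not lift to an $A_\infty$ quasi-isomorphism, and establishing it is precisely the hard content of the step. The paper avoids the problem entirely by noting that the compatibility condition \eqref{eq:filt} makes $S_i$ into an $A_\infty$-$\Gammasf$-$\bar{\mathsf{R}}$-bimodule; the sought $A_\infty$-functor is then just $S_i\overset{\infty}{\otimes}_{\bar{\mathsf{R}}}-$, and full faithfulness needs only the cohomological computation \eqref{eq:endast} together with d\'evissage as in Lemma~\ref{lem:ff}. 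If you try to close your gap honestly, the datum you need — an $A_\infty$-morphism $\bar{\mathsf{R}}\to\uEnd(T)$ inducing your graded isomorphism — \emph{is} the right $\bar{\mathsf{R}}$-module structure on $T$, i.e.\ the bimodule observation. So you should use that directly rather than route through Lemma~\ref{lem:porta}; as written, your proof has a genuine hole at its central step.
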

\begin{proof} For $i=0,\ldots,n-1$ let
\[
P_i=\Gammasf(i,-)
\]
and $P_n=0$. For $i=0,\ldots,n-1$ the element $P_i \in \Dinf(\Gammasf)$  corresponds to the $i+1^{\text{th}}$ column in \eqref{eq:Gamma} and we have obvious inclusion maps
\[
\psi_i:P_{i+1}\r P_i.
\]
Put 
\begin{equation}
\label{eq:Si}
S_i\coloneqq\cone \psi_i=\begin{pmatrix} F^i/F^{i+1}\\ F^{i-1}/F^{i}\\\vdots\\ \mathsf{R}/F^1\\ 0\\\vdots\\0 \end{pmatrix} 
\end{equation}
(in particular $S_{n-1}=P_{n-1}$). By the Yoneda Lemma we see that 
\begin{equation}
\label{eq:orth}
\Hom^\ast_{\Dinf(\Gammasf)}(P_j,S_i)=H^*(S_i(j))=
\begin{cases}
0&\text{if $j>i$}\\
H^\ast(\bar{\mathsf{R}})&\text{if $j=i$}.
\end{cases}
\end{equation}
We also find using the long exact sequence for the distinguished triangle $P_{i+1}\r P_i\r S_i\r$
\begin{equation}
\label{eq:endast}
\End^\ast_{\Dinf(\Gammasf)}(S_i,S_i)=\Hom^\ast_{\Dinf(\Gammasf)}(P_i,S_i)=H^\ast(\bar{\mathsf{R}}).
\end{equation}
We now have by \eqref{eq:orth} semi-orthonal decompositions
\[
\langle P_i,\ldots, P_{n-1}\rangle =\langle \langle S_i\rangle,\langle P_{i+1},\ldots, P_{n-1}\rangle\rangle,
\]
which by induction yield a semi-ortogonal decomposition
\[
\Perf(\Gammasf)=\langle \langle S_0\rangle,\ldots ,\langle S_{n-1}\rangle \rangle.
\]
Using \eqref{eq:Si} and the compatibility conditions \eqref{eq:filt} for the filtration $F^*$, we check that the $S_i$ are in fact $A_\infty-\Gammasf-\bar{\mathsf{R}}$-bimodules. Thus we have DG functors
\[
S_i\overset{\infty}{\otimes}_{\bar{\mathsf{R}}} -:\Dscr_\infty(\bar{\mathsf{R}})\r \Dscr_\infty(\Gammasf)
\]
and the corresponding exact functors
\[
S_i\overset{\infty}{\otimes}_{\bar{\mathsf{R}}} -:\Dinf(\bar{\mathsf{R}})\r \Dinf(\Gammasf),
\]
which send $\bar{\mathsf{R}}$ to $S_i$ and therefore are fully faithful by \eqref{eq:endast} and Lemma \ref{lem:ff}.
So they establish equivalences
\[
\Perf(\bar{\mathsf{R}}) \cong \langle S_i\rangle 
\]
finishing the proof.
\end{proof}

Let us call an $A_\infty$-algebra $A$ \emph{geometric} if there is a fully faithful Fourier-Mukai functor $f:\Perf A\hookrightarrow D^b(\coh(X))$ for $X$ a smooth and projective $k$-scheme,
such that in addition $f$ has a left and a right adjoint.
\begin{corollary}[Geometric realization] \label{cor:geom}
Let $\mathsf{R}$ be a finite dimensional $A_\infty$-algebra equipped with a finite descending filtration such that $\mathsf{R}/F^1\mathsf{R}$
is geometric.
Then there exists a fully faithful Fourier-Mukai functor
$\Perf{\mathsf{R}}\hookrightarrow D^b(\coh(X))$ where $X$ is a smooth projective $k$-scheme.
\end{corollary}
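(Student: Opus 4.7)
The plan is to combine the fully faithful embedding into the Auslander category with Orlov's gluing theorem for semi-orthogonal decompositions from \cite{MR3545926}.

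First, form the modified Auslander $A_\infty$-category $\Gammasf = \Gammasf_{\mathsf{R},F^*}$ of Section \ref{sec:filtered}. By Corollary \ref{cor:ff} there is a fully faithful Fourier-Mukai functor
\[
\Perf(\mathsf{R}) \hookrightarrow \Perf(\Gammasf),
\]
and since compositions of Fourier-Mukai functors are Fourier-Mukai, it suffices to exhibit a fully faithful Fourier-Mukai embedding of $\Perf(\Gammasf)$ into $D^b(\coh(X))$ for some smooth projective $k$-scheme $X$.

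Next, apply Proposition \ref{prop:sod} to obtain a semi-orthogonal decomposition
\[
\Perf(\Gammasf) = \langle \Perf(\bar{\mathsf{R}}),\ldots,\Perf(\bar{\mathsf{R}})\rangle
\]
with $n$ factors. By the geometricity hypothesis, each factor $\Perf(\bar{\mathsf{R}})$ admits a fully faithful Fourier-Mukai embedding into $D^b(\coh(Y))$ for some smooth projective $Y$, equipped with both a left and a right adjoint. This is precisely the input required by Orlov's gluing theorem, which I would apply iteratively: at each step, glue an already-geometric piece to a new copy of $\Perf(\bar{\mathsf{R}})$, producing a new smooth projective variety whose derived category contains the glued category via a fully faithful Fourier-Mukai functor which itself admits both adjoints, so that the induction can be continued. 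After $n-1$ gluings one arrives at $\Perf(\Gammasf) \hookrightarrow D^b(\coh(X))$, and composition with the functor from Corollary \ref{cor:ff} yields the conclusion.

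The main technical obstacle is matching the $A_\infty$-theoretic decomposition of Proposition \ref{prop:sod} with the DG setting in which Orlov's gluing theorem is stated. This is handled via Remark \ref{rem:quasi}: one may replace $\Gammasf$ by a quasi-equivalent DG-category and transfer the SOD and its gluing bimodules $S_i$ from \eqref{eq:Si} to that setting. One also needs to verify that these gluing bimodules are perfect on both sides in the sense required by \cite{MR3545926}, which is automatic from the finite-dimensionality of $\mathsf{R}$ (the subquotients $F^j/F^{j+1}$ are finite-dimensional). Finally, the stipulation in the definition of \emph{geometric} that the Fourier-Mukai embedding carry both adjoints is essential here: it is exactly what allows the adjointness---and hence the Fourier-Mukai property---to propagate through each successive gluing step.
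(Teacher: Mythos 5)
Your proof is correct and takes essentially the same route as the paper: apply Proposition \ref{prop:sod} together with Orlov's gluing theorem \cite[Theorem 4.15]{MR3545926} to geometrize $\Perf(\Gammasf)$, then pre-compose with the embedding from Corollary \ref{cor:ff}. The only difference is presentational: you unpack Orlov's theorem into an explicit iterative gluing, whereas the paper cites the theorem directly (it already treats a finite semi-orthogonal decomposition in one application, with the induction internal to its proof), so spelling out the iteration and the propagation of adjoints is not additional work the paper omitted but rather a sketch of the cited result.
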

\begin{proof} Combining Proposition \ref{prop:sod} with \cite[Theorem 4.15]{MR3545926} we obtain that there exists a fully faithful Fourier-Mukai functor 
\[
\Perf{\Gammasf}\hookrightarrow D^b(\coh(X)),
\]
where $X$ is a smooth projective $k$-scheme. Then we pre-compose this functor with the fully faithful Fourier-Mukai functor 
\[
\Perf{\mathsf{R}}\hookrightarrow \Perf{\Gammasf}
\]
of Corollary \ref{cor:ff}.
\end{proof}
\begin{corollary}
\label{cor:geom2}
Assume $\mathsf{R}$ is an $A_\infty$-algebra such that $H^\ast(\mathsf{R})$ is finite dimensional and concentrated in degrees $\le 0$,
and moreover $H^0(\mathsf{R})$ is geometric. Then there exists a fully faithful Fourier-Mukai functor
$\Perf{\mathsf{R}}\hookrightarrow D^b(\coh(X))$, where $X$ is a smooth projective $k$-scheme.
\end{corollary}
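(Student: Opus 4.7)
The plan is to deduce this from Corollary~\ref{cor:geom} by exhibiting a suitable finite descending filtration on a good model of $\mathsf{R}$. Since $\Perf$ is invariant under $A_\infty$-quasi-isomorphism, via a Fourier--Mukai equivalence induced by the corresponding quasi-equivalence of $\uPerf$, I would first replace $\mathsf{R}$ by a strictly unital minimal model $\mathsf{R}'$: one whose underlying graded vector space equals $H^\ast(\mathsf{R})$ and whose differential $m_1$ vanishes. Such an $\mathsf{R}'$ exists by Kadeishvili's theorem in the strictly unital form used in \cite{Lefevre}. By hypothesis, $\mathsf{R}'$ is then finite-dimensional and concentrated in degrees $\le 0$.

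Next I would equip $\mathsf{R}'$ with the degree filtration
\[
F^p\mathsf{R}' \coloneqq \bigoplus_{d \le -p} (\mathsf{R}')^d, \qquad p \ge 0,
\]
so that $F^0 = \mathsf{R}'$ and $F^n = 0$ for some $n \ge 1$ coming from finite-dimensionality. The compatibility \eqref{eq:filt} is immediate from a degree count: since $m_p$ has cohomological degree $2-p$, for $p \ge 2$ it sends $F^{i_1} \otimes \cdots \otimes F^{i_p}$ into something of degree $\le -(i_1+\cdots+i_p)+(2-p) \le -(i_1+\cdots+i_p)$, which lies in $F^{i_1+\cdots+i_p}$; and for $p = 1$ there is nothing to verify because $m_1 = 0$ by minimality.

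The key identification is that the quotient $\mathsf{R}'/F^1$ is precisely $H^0(\mathsf{R})$ with its ordinary associative algebra structure. As a graded vector space this is clear. Moreover, for $p \ge 3$ the operation $m_p$ applied to degree-$0$ inputs has image in degree $2-p \le -1$, i.e.\ in $F^1$, hence vanishes in the quotient; only $m_2$ survives. Thus $\mathsf{R}'/F^1 \cong H^0(\mathsf{R})$ as $A_\infty$-algebras, and the right-hand side is geometric by hypothesis. Applying Corollary~\ref{cor:geom} to the filtered $A_\infty$-algebra $(\mathsf{R}', F^*)$ and precomposing with the Fourier--Mukai equivalence $\Perf \mathsf{R} \xrightarrow{\sim} \Perf \mathsf{R}'$ then yields the desired embedding into $D^b(\coh(X))$.

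There is no dramatic obstacle: the argument is essentially bookkeeping, with the real content packaged into Corollary~\ref{cor:geom}. The main technical points to be careful about are the existence of a strictly unital minimal model (so that $m_1=0$ and our degree filtration respects all higher multiplications) and the fact that an $A_\infty$-quasi-isomorphism induces a Fourier--Mukai equivalence of perfect categories, both of which are standard within the Lefèvre--Hasegawa framework adopted in Section~\ref{sec:prelims}.
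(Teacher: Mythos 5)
Your proof is correct and follows exactly the same route as the paper: pass to a strictly unital minimal model, put on the degree filtration $F^p\mathsf{R} = \bigoplus_{i \ge p}\mathsf{R}^{-i}$, observe that $\mathsf{R}/F^1 \cong H^0(\mathsf{R})$, and invoke Corollary~\ref{cor:geom}. The paper's proof is just a two-line version of yours, leaving the degree-count verification of \eqref{eq:filt} implicit.
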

\begin{proof} Without loss of generality we may assume that $\mathsf{R}$ is minimal. We now apply Corollary \ref{cor:geom}
with the filtration $F^p\mathsf{R}=\bigoplus_{i\ge p} \mathsf{R}^{-i}$.
\end{proof}
\begin{remark}
Since $H^0(\mathsf{R})$ is assumed to be a finite dimensional algebra, the following lemma may be helpful for checking geometricity of $H^0(\mathsf{R})$ in order to apply Corollary \ref{cor:geom2}:
\newtheorem*{lemma1}{Lemma}
\begin{lemma1} Assume that $A$ is a finite dimensional $k$-algebra. The following are equivalent:
\begin{enumerate}
\item $A$ is geometric.
\item $A$ is smooth (i.e. $\pdim_{A^e}A<\infty$).
\item $A/\rad A$ is separable over $k$ and $\gldim A<\infty$.
\end{enumerate}
\end{lemma1}
\begin{proof}
\strut
\begin{itemize}
\item[$(1)\Rightarrow (2)$] This is
\cite[Theorem 3.25]{MR3545926}. 
\item[$(2)\Rightarrow (3)$] The fact that  $A/\rad A$ is
separable over $k$ is \cite[Theorem 3.6]{reyes2018twisted}, which in turn comes from a MathOverflow answer by Jeremy Rickard \cite{247352}. Finite global dimension is classical.
\item[$(3)\Rightarrow (1)$] This is \cite[Corollary 5.4]{MR3545926}. \qedhere
\end{itemize}
\end{proof}
\end{remark}

\section{Proof of Theorem \ref{th:mainth}}
\label{sec:proof}
Before we proceed with the proof of the Theorem, let us recall some definitions and notation. Unless specified otherwise, in this section $X$ will denote a quasi-compact separated $k$-scheme.

\begin{definition}
If $M\in D(\Oscr_X)$ then the \emph{Hochschild cohomology} of $M$ is defined as 
\[
\HH^\ast(X,M)\coloneqq\Ext^*_{X\times X}(i_{\Delta,\ast}\Oscr_X,i_{\Delta,\ast}M)
\]
where $i_\Delta:X\r X\times X$ is the diagonal map.
\end{definition}

\begin{definition}
Let $X=\bigcup_{i=1}^n U_i$ be an affine covering. For $I\subset \{1,\ldots,n\}$ let $U_I=\bigcap_{i\in I} U_i$. Let $\Iscr$ be the set $\{I\subset\{1,\ldots,n\}\mid I\neq \emptyset\}$. Then $\Xscr$ is defined to be the category with objects
 $\Iscr$ and $\Hom$-sets
\begin{equation}
\Xscr(I,J)=
\begin{cases}
\Oscr_X(U_J)&\text{$I\subset J$}\\
0&\text{otherwise.}
\end{cases}
\end{equation}
\end{definition}

Roughly this allows to think of $\Mod(\Xscr)$ as the category of presheaves associated to an affine covering of $X$. This construction has many good properties. In particular, it will be important for us that there is a fully faithful embedding
\[
w:D(\Qch(X))\to D(\Xscr)
\]
and that for a quasi-coherent sheaf $M$ on $X$ we have
\begin{equation}
\label{eq:hochhoch}
\HH^*(X,M)\cong \HH^*(\Xscr,W(M)),
\end{equation}
where $W(M)$ is the $\Xscr$-bimodule associated to $M$. For more details on this construction, see \cite[\S9.3]{MR3955712} (alternatively see the introduction in loc.\ cit.). 

We will also need a deformed version of $\Xscr$. We give the definition in this case, but the general construction can be found in \cite[\S11]{MR3955712}.

\begin{definition}
Let  $\Mscr$ be a $k$-central $\Xscr$-bimodule and $\eta\in \HH^n(\Xscr,\Mscr)$. Lift $\eta$ to a Hochschild cocycle, which we will also denote by $\eta$. Let $\tilde{\Xscr}$ be the DG-category $\Xscr\oplus \Sigma^{n-2}\Mscr$ whose objects are the objects of $\Xscr$, morphisms are given by $\Xscr(-,-)\oplus \Sigma^{n-2}\Mscr(-,-)$, and composition is coming from the composition in $\Xscr$ and the action of $\Xscr$ on $\Mscr$.

We define as $\Xscr_{\eta}$ the $A_{\infty}$-category $\tilde{\Xscr}$ with deformed $A_{\infty}$-structure given by 
\[
b_{\Xscr_{\eta}}\coloneqq b_{\tilde{\Xscr}}+\eta
\]
where $b_{(-)}$ denotes the codifferential on the corresponding bar construction giving the $A_{\infty}$-structure, and where we view $\eta$ as a map of degree one $(\Sigma\Xscr)^{\otimes n}\to \Sigma(\Sigma^{n-2}\Mscr)$ and extend it to a map $\eta:(\Sigma \Xscr_{\eta})^{\otimes n}\to \Sigma \Xscr_\eta$ by making the unspecified component zero.
\end{definition}

\begin{lemma}
Let $X$ be a smooth projective scheme of dimension $m\ge 3$ which has a tilting bundle. Let  $M=\omega_X^{\otimes 2}$ and $0\neq \eta\in \HH^{2m}(X,M)\cong k$ (\cite[Lemma 10.6.1]{MR3955712}). 
View $\eta$ as an element of $\HH^*(\Xscr,\Mscr)$, for $\Mscr=W(M)$, via \eqref{eq:hochhoch}.
 Then there exists an exact functor
\[
L:D^b(\coh(X))\to \Dinf(\Xscr_\eta)
\]
which is non-Fourier-Mukai (see Definition \ref{def:non-FM}).
\end{lemma}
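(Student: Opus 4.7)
My plan is to follow the strategy developed in \cite{MR3955712} for constructing non-Fourier-Mukai functors out of non-trivial Hochschild classes, adapting it to the present generality.

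For the construction of $L$: using the fully faithful embedding $w: D(\Qch(X)) \to D(\Xscr)$, every $F \in D^b(\coh(X))$ is realized as a DG $\Xscr$-module. The $A_\infty$-deformation $\Xscr_\eta$ of $\Xscr$ defined by the cocycle $\eta$ then upgrades this to an $A_\infty$-$\Xscr_\eta$-module via the deformed composition. This produces the desired exact functor $L: D^b(\coh(X)) \to \Dinf(\Xscr_\eta)$; exactness is automatic from the $A_\infty$-formalism. The precise construction is carried out in \cite[\S\S 11--12]{MR3955712}.

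To show that $L$ is non-Fourier-Mukai I would argue by contradiction. Suppose $L$ admits an $A_\infty$-lift $\ell: \aa \to \uPerf(\Xscr_\eta)$, where $\aa$ is a DG-enhancement of $D^b(\coh(X))$. Such a lift would provide, at the level of Hochschild cocycles, a trivialization of the deformation class $\eta$ transported via the comparison \eqref{eq:hochhoch}. Combined with the computation $\HH^{2m}(X, \omega_X^{\otimes 2}) \cong k$ from \cite[Lemma 10.6.1]{MR3955712} (which crucially uses $m \ge 3$), this would force $\eta = 0$, contradicting our choice of $0 \neq \eta$.

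The main obstacle is precisely this obstruction-theoretic step --- establishing rigorously that an $A_\infty$-lift of $L$ makes the cocycle $\eta$ a coboundary. It requires carefully tracking how an $A_\infty$-functor interacts with the perturbation $\Xscr \leadsto \Xscr_\eta$ at the level of higher products, and is essentially the heart of the original argument in \cite{MR3955712}. The tilting-bundle hypothesis presumably enters here to identify $D^b(\coh(X))$ with $\Perf$ of a finite-dimensional algebra via Lemma \ref{lem:porta}, streamlining the comparison with the target $\Dinf(\Xscr_\eta)$ so that the obstruction computation can be carried out concretely.
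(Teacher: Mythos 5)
Your high-level strategy (cite the machinery of \cite{MR3955712}, then argue non-Fourier-Mukai by an obstruction-theoretic contradiction) matches the paper, but the mechanism you propose for the contradiction is not the correct one, and this is precisely where the content of the argument lies.

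You claim that an $A_\infty$-lift $\ell$ of $L$ ``would provide, at the level of Hochschild cocycles, a trivialization of the deformation class $\eta$'' and thus force $\eta = 0$. This is not how the argument works, and it cannot work this way: the deformation $\Xscr_\eta$ is a bona fide nontrivial $A_\infty$-deformation, and the existence of some $A_\infty$-functor into $\Dscr_\infty(\Xscr_\eta)$ has no reason to make $\eta$ exact. In particular the computation $\HH^{2m}(X,\omega_X^{\otimes 2}) \cong k$ is used only to \emph{produce} a nonzero $\eta$; it plays no role in the contradiction itself. The actual argument in the paper evaluates $L$ on the tilting bundle $T$ (with endomorphism algebra $A = \End_X(T)$ and $\Tscr = w(T)$), uses the distinguished triangle of \cite[Lemma 11.3]{MR3955712} to compute $H^\ast(L(T)) = \Tscr \oplus \Sigma^{-2m+2}(\Mscr^{-1}\otimes_\Xscr\Tscr)$ compatibly with the $H^\ast(\Xscr_\eta)$- and $A$-actions, and then recognizes $L(T)$ as a would-be \emph{colift} of $\Tscr \in \Dinf(\Xscr\otimes_k A)$ to $\Dinf(\Xscr_\eta \otimes_k A)$ in the sense of \cite[\S\S 7.2, 7.4]{MR3955712}. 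The obstruction to the existence of such a colift is the image of $\eta$ under a \emph{characteristic morphism} valued in an $\Ext$-group, and the second part of the proof of \cite[Lemma 12.4]{MR3955712} shows this image is nonzero. That nonvanishing, not the nonvanishing of $\eta$, is what contradicts the existence of a Fourier-Mukai lift. So while you have correctly identified that there is a gap to fill, the filling you sketch would not close it; the key missing idea is the reduction to a colifting problem for $\Tscr$ with its $A$-module structure, and the characteristic morphism that governs it.

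A minor additional point: the role of the tilting bundle in this lemma is somewhat incidental — as the paper remarks, the non-Fourier-Mukai conclusion holds without it, though the tilting bundle streamlines the obstruction computation and is genuinely needed for the geometrization in the proof of Theorem \ref{th:mainth}. Your guess that it enters via Lemma \ref{lem:porta} is more relevant to that later step than to the present lemma.
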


\begin{proof}
Consider the functor
\[
L:D^b(\Qch(X)) \r \Dinf(\Xscr_\eta)
\]
constructed in \cite[(11.3)]{MR3955712}\footnote{In loc.\ cit.\ we first replace $\Xscr_\eta$ by its $A_\infty$-quasi-isomorphic (unital) DG-hull $\Xscr^{\text{dg}}_\eta$. This is technically
convenient but not essential for the present discussion (cfr Remark \ref{rem:quasi}). Note in particular that since $\Xscr_\eta\r \Xscr^{\mathrm{dg}}
_\eta$ is an $A_\infty$-quasi-isomorphism we have 
$\Dinf(\Xscr_\eta)\cong \Dinf(\Xscr_\eta^{\mathrm{dg}}) \cong D(\Xscr_\eta^{\mathrm{dg}})$ 
by \cite[Lemme 4.1.3.8]{Lefevre}.}. We claim that 
the composition
\begin{equation}
\label{eq:composition3}
D^b(\coh(X))\hookrightarrow D^b(\Qch(X)) \xrightarrow{L} \Dinf(\Xscr_\eta)
\end{equation}
is a non-Fourier-Mukai functor. 

Let $T$ be a tilting bundle for $X$, $A=\End_X(T)$ and $\Tscr=w(T)$. The distinguished triangle in \cite[Lemma 11.3]{MR3955712} applied to $T$ gives a distinguished triangle
\begin{equation}
\Tscr \xrightarrow{\alpha} L(T)\xrightarrow{\beta} \Sigma^{-2m+2}\Mscr^{-1}\otimes_{\Xscr} \Tscr\r 
\end{equation}
and hence
\[
H^\ast(L(T))=\Tscr\oplus \Sigma^{-2m+2}(\Mscr^{-1}\otimes_{\Xscr} \Tscr).
\]
Moreover, by construction, this isomorphism is compatible with the
$H^\ast(\Xscr_\eta)$ and $A$-actions. In the terminology of \cite[\S
7.2, 7.4]{MR3955712}, $L(T)$ is a colift of
$\Tscr\in \Dinf(\Xscr\otimes_k A)$ to
$\Dinf((\Xscr\otimes_k A)_{\eta\cup 1})=\Dinf(\Xscr_\eta\otimes_k A)$.

Setting $\tilde{\Tscr}\coloneqq L(T)$, the fact that such a colift cannot exist is shown in the second part of the proof of \cite[Lemma 12.4]{MR3955712} (the argument as written is for the case $m=3$, but this part of the proof is exactly the same for a general $m\geq 3$). The proof in loc.\ cit.\ computes an obstruction against the existence of  the colift, which is given by the image of $\eta$ under the characteristic morphism defined in \cite[\S7.4]{MR3955712}; this obstruction is shown to be nonzero.
\end{proof}

\medskip

\begin{proof}[Proof of Theorem \ref{th:mainth}]
Let $\Ascr$ be the smallest thick subcategory of $\Dinf(\Xscr_\eta)$
containing the essential image of $D^b(\coh(X))$ under $L$. It is clear that the corestricted functor
\[
L:D^b(\coh(X))\r \Ascr
\]
is still non-Fourier-Mukai.

Let $\aa$ be the full
sub-DG-category of $\Dscr_\infty(\Xscr_\eta)$ spanned by $\Ob(\Ascr)$.
Then we have $H^0(\aa)=\Ascr$.
Let $\mathsf{R}=\aa(L(T),L(T))$. By Lemma \ref{lem:porta} we have a quasi-equivalence
$\aa\r \uPerf(\mathsf{R}^\circ)$. The composed functor
\begin{equation}
\label{eq:composition}
D^b(\coh(X))\xrightarrow{L} \Ascr\xrightarrow{\cong} \Perf(\mathsf{R}^\circ)
\end{equation}
is still non-Fourier-Mukai
since quasi-equivalences are invertible up to homotopy
\cite[Th\'eor\`eme 9.2.0.4]{Lefevre}.

Let $T$ be a tilting bundle for $X$, let $\Tscr=w(T)$ be the left $\Xscr$-module associated to $T$ and let $\Mscr=W(M)$ be the $\Xscr$-bimodule
associated to $M$.
By the discussion before \cite[(12.5)]{MR3955712} we have a distinguished triangle of complexes of vector spaces (taking into account that in the current setting the quantity $n$ in loc.\ cit.\ is equal to $2m$)
\[
\RHom_{\Xscr}(\Sigma^{-2m+2} \Mscr^{-1}\otimes_{\Xscr} \Tscr, \Tscr)\r \RHom_{\Xscr_\eta}(L(T),L(T))\r \RHom_\Xscr(\Tscr,\Tscr)\r 
\]
Using \cite[Lemma 9.4.1]{MR3955712} this becomes
\[
\RHom_{X}(\Sigma^{-2m+2} M^{-1}\otimes_{X} T, T)\r \RHom_{\Xscr_\eta}(L(T),L(T))\r \RHom_X(T,T)\r 
\]
which is equivalent to 
\begin{equation}
\label{eq:distinguished}
\Sigma^{2m-2}\RHom_{X}( T, M\otimes_X T)\r \RHom_{\Xscr_\eta}(L(T),L(T))\r \RHom_X(T,T)\r 
\end{equation}
The cohomology of $\RHom_{X}( T, M\otimes_X T)$ is concentrated in degrees $\le m$. Whence
the cohomology of $\Sigma^{2m-2}\RHom_{X}( T, M\otimes_X T)$ is concentrated in degrees $\le m-(2m-2)<0$ (as $m\ge 3$). 
It now follows from \eqref{eq:distinguished} that $\mathsf{R}$ is an $A_\infty$-algebra such that $H^\ast(\mathsf{R})$ is finite dimensional and concentrated in degrees $\le 0$
and moreover $H^0(\mathsf{R})=\End_X(T)$. As $\End_X(T)^\circ$ is tautologically geometric we obtain by Corollary \ref{cor:geom2}
 a fully faithful Fourier-Mukai functor 
\begin{equation}
\label{eq:geomappl}
\Perf(R^\circ)
\hookrightarrow D^b(\coh(Y)).
\end{equation}
The functor \eqref{eq:nonfm} is now the composition of \eqref{eq:composition} and \eqref{eq:geomappl}. To see that is non-Fourier-Mukai we factor it as 
\begin{equation}
\label{eq:composition2}
D^b(\coh(X))\r \Perf(\mathsf{R}^\circ)\cong \Perf(\mathsf{R}^\circ)\,\tilde{}\subset D^b(\coh(Y))
\end{equation}
where $ \Perf(\mathsf{R}^\circ)\,\tilde{}$ is the essential image of
\eqref{eq:geomappl}. 
Note that since $A_\infty$-quasi-equivalences may be inverted up to homotopy by
\cite[Th\'eor\`eme 9.2.0.4]{Lefevre}, the inverse of
$\Perf(\mathsf{R}^\circ)\cong \Perf(\mathsf{R}^\circ)\,\tilde{}$ is
also a Fourier-Mukai functor. Now if the composition \eqref{eq:composition2} were Fourier-Mukai, then so would be the corestricted functor
$D^b(\coh(X))\r \Perf(\mathsf{R}^\circ)\,\tilde{}$.  Hence the compositon
\[D^b(\coh(X))\r \Perf(\mathsf{R}^\circ)\,\tilde{}\cong
\Perf(\mathsf{R}^\circ)\]
would also be a Fourier-Mukai functor; but this composition is equivalent to \eqref{eq:composition}. This is a
contradiction.
\end{proof}

\begin{remark}
With a little bit more work one may show that the fact that \eqref{eq:composition} is non-Fourier-Mukai is
also true without the hypothesis that $X$ has a tilting bundle. However the tilting bundle is anyway
needed for the rest of the construction.
\end{remark}

\appendix
\section{A different geometrization result}
Our original approach for constructing examples where the conditions
for Corollary \ref{cor:geom} are satisfied was based on Lemma
\ref{lem:original} below. This lemma can be used under some constraints on the shape of the $A_\infty$-algebra, and it provides a different filtration from the one we used in Corollary \ref{cor:geom2}. $A_\infty$-algebras of this shape have been used for example in \cite{MR3955712}. We provide this lemma here since we think it might be potentially useful in other situations.
\begin{lemma} \label{lem:original}
Let $\mathsf{R}$ be a finite dimensional minimal $A_\infty$-algebra such that $\mathsf{R}_i=0$ for $i\not\in \{0,-\kappa\}$ for some $\kappa>0$. Then $\mathsf{R}$ has a finite decreasing filtration as in \S\ref{sec:filtered} such that $\mathsf{R}/F^1\mathsf{R}$ is a semi-simple $k$-algebra.
\end{lemma}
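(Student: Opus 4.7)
The plan is to exploit that $\mathsf{R}$ is concentrated in only two degrees to severely constrain the $A_\infty$-structure, then construct an explicit filtration built from the Jacobson radical of $\mathsf{R}_0$ together with a compatibly shifted filtration on the bimodule $\mathsf{R}_{-\kappa}$. Setting $A = \mathsf{R}_0$ and $M = \mathsf{R}_{-\kappa}$, minimality ($m_1 = 0$) together with the degree count $\deg m_p = 2-p$ forces every nonzero $m_p$ to be either $m_2$, which makes $A$ an associative $k$-algebra and $M$ an $A$-bimodule (the component $m_2|_{M \otimes M}$ vanishes as its target would sit in degree $-2\kappa$), or $m_{2+\kappa}:A^{\otimes(2+\kappa)} \to M$, whose inputs must all lie in $A$. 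Let $J = \rad A$ and fix $N$ with $J^N = 0$.

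I would then introduce the filtration $F^0 \mathsf{R} = \mathsf{R}$ and, for $p \geq 1$,
\[
F^p \mathsf{R} = J^p \oplus G^p,
\qquad
G^p := \sum_{i+j \geq p-1} J^i M J^j \;+\; \sum_{i+j+p_1+\cdots+p_{2+\kappa} \geq p} J^i\, m_{2+\kappa}(J^{p_1} \otimes \cdots \otimes J^{p_{2+\kappa}})\, J^j,
\]
with $J^0 := A$, all indices nonnegative, and $G^0 = G^1 = M$ by convention. Equivalently, $F^\bullet$ is the smallest decreasing filtration with $F^1 \supset J \oplus M$ satisfying \eqref{eq:filt}. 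Compatibility with $m_2$ boils down to $J^a\cdot J^b \subset J^{a+b}$, the inclusion $J^a G^b + G^a J^b \subset G^{a+b}$ (an immediate index chase from the defining formula for $G^p$), and $m_2(M\otimes M)=0$; compatibility with $m_{2+\kappa}$ is built in by the $i=j=0$ case of the second summand. Since $F^1 = J \oplus M$, the quotient $\mathsf{R}/F^1 \cong A/J$ is semisimple by definition of the Jacobson radical.

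The main step, and the one I expect to be the actual obstacle, is showing that $F^p = 0$ for $p$ sufficiently large. On the algebra part this is clear since $F^p \cap A = J^p$ vanishes for $p \geq N$. For the bimodule part one must first argue that the explicit formula really captures all of $F^p \cap M$, i.e.\ that no further iteration of the operations can produce a genuinely new kind of generator: this is forced because $m_{2+\kappa}$ only accepts $A$-inputs (so its $M$-valued output can never be re-fed into $m_{2+\kappa}$) together with $m_2(M \otimes M) = 0$, so every composite reduces to one of the two shapes in the definition of $G^p$. Once that is granted, any nonzero generator requires each of $i, j, p_1, \ldots, p_{2+\kappa}$ to be at most $N-1$, so the total weight is bounded by $(4+\kappa)(N-1)$, and hence $F^p = 0$ once $p$ exceeds this bound.
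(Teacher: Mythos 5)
Your proof is correct, but it takes a genuinely different route from the paper's. The paper's key idea is a \emph{padding} trick: after exhausting the radical filtration on $\mathsf{R}_0$ in $a$ steps (where $J^a=0$), it holds the filtration constant at $F^p=\mathsf{R}_{-\kappa}$ for $a\le p\le N$ with $N\ge(\kappa+2)(a-1)$, and only then starts filtering $\mathsf{R}_{-\kappa}$ by the bimodule filtration $\sum_{i+j=p-N}J^i\mathsf{R}_{-\kappa}J^j$. The point of the plateau is that compatibility with $m_{\kappa+2}$ then becomes automatic: since $m_{\kappa+2}$ accepts only $\mathsf{R}_0$-inputs, any nonzero application has all filtration indices $\le a-1$, so the target index $\sum p_i\le(\kappa+2)(a-1)\le N$ and the whole of $\mathsf{R}_{-\kappa}=F^N$ lies where it needs to; no tracking of the image of $m_{\kappa+2}$ is needed. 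Your approach instead constructs the filtration as (essentially) the smallest one with $F^1\supset J\oplus M$ closed under the $A_\infty$-operations, and writes it down explicitly by enumerating the two kinds of generators ($J^iMJ^j$ and $J^i\,m_{\kappa+2}(J^{p_1},\dots,J^{p_{\kappa+2}})\,J^j$); termination then follows from the nilpotency of $J$ applied to each of the finitely many indices. This is slightly heavier in bookkeeping than the padding trick, and I'd note that the ``smallest filtration'' framing is not really needed: once you commit to the explicit formula for $G^p$ as the definition, the compatibility checks you sketch suffice, and the ``one must first argue that the explicit formula captures all of $F^p\cap M$'' step disappears. One small imprecision: the bound $(4+\kappa)(N-1)$ should really be $\max\bigl(2N-1,\,(4+\kappa)(N-1)\bigr)$ to also kill the first summand $\sum_{i+j\ge p-1}J^iMJ^j$ (the two bounds coincide for $N\ge 2$, $\kappa\ge 1$, but not for $N=1$); this is harmless. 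Both proofs are valid; the paper's is cleaner, while yours is a bit more systematic and would generalize more readily to cases where the higher multiplications have images in more than one degree.
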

\begin{proof} We consider $\mathsf{R}$ as a $k$-algebra using the multiplication $m_2$, which is associative since $\mathsf{R}$ is minimal.
The $A_\infty$-structure on $\mathsf{R}$ is given by the unique higher multiplication $m_{\kappa+2}:\mathsf{R}_0\times\cdots\times \mathsf{R}_0\r \mathsf{R}_{-\kappa}$. Let $J=\rad\mathsf{R}_0$
and let $a$ be such that $J^a=0$. Let $N\ge (\kappa+2)(a-1)$.
We now give a filtration by graded vector spaces on $\mathsf{R}$
\begin{align*}
F^0\mathsf{R}&=\mathsf{R}_0\oplus \mathsf{R}_{-\kappa}\\
F^1\mathsf{R}&=J\oplus \mathsf{R}_{-\kappa}\\
F^2\mathsf{R}&=J^2\oplus \mathsf{R}_{-\kappa}\\
&\vdots\\
F^{a-1}\mathsf{R}&=J^{a-1}\oplus \mathsf{R}_{-\kappa}\\
F^{a}\mathsf{R}&=J^a\oplus\mathsf{R}_{-\kappa}=\mathsf{R}_{-\kappa}\\
&\vdots\\
F^{N}\mathsf{R}&=J^N\oplus\mathsf{R}_{-\kappa}=\mathsf{R}_{-\kappa}\\
F^{N+1}\mathsf{R}&=J\mathsf{R}_{-\kappa}\oplus\mathsf{R}_{-\kappa} J\\
F^{N+2}\mathsf{R}&=J^2\mathsf{R}_{-\kappa}\oplus J\mathsf{R}_{-\kappa} J\oplus \mathsf{R}_{-\kappa} J^2\\
&\vdots
\end{align*}
We check that this is a filtration of $A_\infty$-algebras. We first check compatibily with~$m_2$. I.e. $m_2(F^p,F^q)\subset F^{p+q}$. The cases $p\ge N$ or $q\ge N$ are clear. So assume $p,q<N$. 
We have for $p,q\le N$: $m_2(F^p,F^q)\subset J^{p+q} \oplus \mathsf{R}_{-\kappa}$. Hence if $p+q\le N$ then there is nothing to prove. So also assume $p+q>N$. Moreover assume $p\le q$ as $q\le p$ is similar. Then we have $m_2(F^p,F^q)\subset J^pR_{-\kappa}\subset F^{p+q}$ (as $q<N$).

To check compatibility with $m_{\kappa+2}$ we have to verify $m_{\kappa+2}(F^{p_1},\cdots,F^{p_{\kappa+2}})\subset F^{\sum_i p_i}$.
If $p_i\ge a$ for some $i$ then the left-hand side is zero and there is nothing to prove. On the other hand, if $p_i\le a-1$ for all $i$ then $\sum_i p_i\le (\kappa+2)(a-1)\le N$.
Hence $m_{\kappa+2}(F^{p_1},\cdots,F^{p_{\kappa+2}})\subset R_{-\kappa} \subset F^{\sum_i p_i}$.
\end{proof}

\bibliography{refs} \bibliographystyle{amsplain}
\end{document}